\newtheorem{definition}{Definition}[section]
\title{Godement--Jacquet $L$-function, some conjectures and some consequences}
\author{Amrinder Kaur \&\ Ayyadurai Sankaranarayanan}
\date{} 
\journal{Hardy-Ramanujan Journal -- (yyyy), ---} 
\begin{document}

\maketitle

\thanks{We thank \href{http://episciences.org}{episciences.org} for providing open access hosting of the electronic journal \emph{Hardy-Ramanujan Journal}}

\begin{prelims}

\def\abstractname{Abstract}
\abstract{In this paper, we investigate the mean square estimate for the logarithmic derivative of the Godement--Jacquet $L$-function $L_f(s)$ assuming the Riemann hypothesis for $L_f(s)$ and Rudnick--Sarnak conjecture.}

\keywords{Godement--Jacquet $L$-function, Rudnick--Sarnak conjecture, Hecke--Maass form, Riemann Hypothesis.}

\MSCclass{11F30 \and 11N75}

\end{prelims}

\section{Introduction} 

\noindent
Let $n \geq 2$, and let $v=(v_1,v_2,\dots,v_{n-1}) \in \mathbb{C}^{n-1}$. A Maass form \cite{Dg} for $SL(n,\mathbb{Z})$ of type $v$ is a smooth function $f \in \mathcal{L}^2(SL(n,\mathbb{Z}) \backslash \mathcal{H}^n)$ which satisfies
\begin{enumerate}
\item $f(\gamma z) = f(z)$, for all $\gamma \in SL(n,\mathbb{Z}), z \in \mathcal{H}^n$,
\item $Df(z) = \lambda_D f(z)$, for all $D \in \mathfrak{D}^n$ where $\mathfrak{D^n}$ is the center of the universal enveloping algebra of $\mathfrak{gl}(n,\mathbb{R})$ and $\mathfrak{gl}(n,\mathbb{R})$ is the Lie algebra of GL(n,$\mathbb{R}$).
\item $\int \limits_{(SL(n,\mathbb{Z}) \cap U) \backslash U} f(uz) \ du = 0, $\\
for all upper triangular groups $U$ of the form 
$$U = \left\{
\begin{pmatrix} 
I_{r_1} &             &           &  \\
           & I_{r_2}  &           & *\\ 
           &             & \ddots &   \\ 
           &             &           & I_{r_b}  \\ 
\end{pmatrix} \right\}, $$
with $r_1+r_2+\cdots+r_b=n$. Here, $I_r$ denotes the $r \times r$ identity matrix, and $*$ denotes arbitrary real entries. \\

\end{enumerate}

\noindent
A Hecke--Maass form is a Maass form which is an eigenvector for the Hecke operators algebra. 

\noindent
Let $f(z)$ be a Hecke--Maass form of type $v=(v_1,v_2,\dots,v_{n-1}) \in \mathbb{C}^{n-1}$ for $SL(n,\mathbb{Z})$. Then it has the Fourier expansion 
\begin{align*}
f(z) &= \sum_{\gamma \in U_{n-1}(\mathbb{Z}) \backslash SL(n-1,\mathbb{Z})} \sum_{m_1=1}^{\infty} \dots \sum_{m_{n-2}=1}^{\infty} \sum_{m_{n-1} \neq 0} \frac{A(m_1,\dots,m_{n-1})}{\prod_{j=1}^{n-1} \abs{m_j}^\frac{j(n-j)}{2}} \\
& \quad \times W_J \left( M \cdot \begin{pmatrix} \gamma &  \\  & 1 \end{pmatrix} z, v, \psi_{1,\dots,1,\frac{m_{n-1}}{\abs{m_{n-1}}}}\right) ,
\end{align*}

\noindent
where 
$$ M = 
\begin{pmatrix} 
 m_1\dots m_{n-2} \cdot \abs{m_{n-1}} &           &             &        & \\
                                                        & \ddots &              &        & \\ 
                                                        &           & m_1m_2 &        & \\ 
                                                        &           &             & m_1 & \\ 
                                                        &           &             &        & 1 
\end{pmatrix} ,
$$

$$ A(m_1,\dots,m_{n-1}) \in \mathbb{C} , \qquad  A(1,\dots,1)=1,$$

$$ \psi_{1,\dots,1,\epsilon} \left( 
\begin{pmatrix}
1 & u_{n-1} &             &            &   \\
  & 1          & u_{n-2}  &            & * \\
  &            & \ddots     & \ddots  &     \\
  &            &              & 1          & u_1 \\
  &            &              &             & 1        
\end{pmatrix}
\right) = e^{2 \pi i (u_1+\cdots +u_{n-2}+\epsilon u_{n-1})},$$

\noindent
$U_{n-1}(\mathbb{Z})$ denotes the group of $(n-1) \times (n-1)$ upper triangular matrices with $1s$ on the diagonal and an integer entry above the diagonal and $W_J$ is the Jacquet Whittaker function. \\

\noindent
If $f(z)$ is a Maass form of type $(v_1,\dots,v_{n-1}) \in \mathbb{C}^{n-1}$, then
$$ \tilde{f}(z) := f(w \cdot (z^{-1})^T \cdot w), $$
$$ w = \begin{pmatrix} 
&&& (-1)^{ \left[ \frac{n}{2} \right]} \\
&& 1 & \\
& \reflectbox{$\ddots$} && \\
1&&& 
 \end{pmatrix} $$
is a Maass form of type $(v_{n-1},\dots,v_1)$ for $SL(n,\mathbb{Z})$ called the \emph{dual Maass form}. If $A(m_1, \dots, m_{n-1})$ is the $(m_1, \dots, m_{n-1})$--Fourier coefficient of $f$, then $A(m_{n-1} , \dots, m_1)$ is the corresponding Fourier coefficient of $\tilde{f}$. \\

\noindent
We note that the Fourier coefficients $A(m_1,\dots,m_{n-1})$ satisfy the multiplicative relations
$$ A(m_1m_1',\dots,m_{n-1}m_{n-1}') = A(m_1,\dots,m_{n-1}) \cdot A(m_1',\dots,m_{n-1}'), $$
if
$$ (m_1 \dots m_{n-1},m_1' \dots m_{n-1}') = 1 ,$$

$$ A(m,1,\dots,1)A(m_1,\dots,m_{n-1}) = \sum_{\prod \limits_{l=1}^n c_l=m \atop c_1|m_1,c_2|m_2,\dots,c_{n-1}|m_{n-1}} A \left(  \frac{m_1c_n}{c_1},\frac{m_2c_1}{c_2},\dots,\frac{m_{n-1}c_{n-2}}{c_{n-1}} \right), $$
and
$$ A(m_{n-1},\dots,m_1) = \overline{A(m_1,\dots,m_{n-1})}. $$ \\

\begin{definition} \cite{Dg}
The Godement--Jacquet $L$-function $L_f(s)$ attached to $f$ is defined for $\Re(s) >1$ by 
$$ L_f(s) = \sum_{m=1}^{\infty} \frac{A(m,1,\dots,1)}{m^s} = \prod_p \prod_{i=1}^n (1-\alpha_{p,i}p^{-s})^{-1} ,$$ 
where $\{ \alpha_{p,i} \}, 1\leq i \leq n$ are the complex roots of the monic polynomial 

$$ X^n + \sum_{j=1}^{n-1} (-1)^j A(\overbrace{1,\dots,1}^{j-1 \; \text{terms}},p,1,\dots,1) X^{n-j} +(-1)^n \in \mathbb{C}[X], \quad \text{and}$$

$$ A(\overbrace{1,\dots,1}^{j-1},p,1,\dots,1) = \sum_{1 \leq i_1 < \dots < i_j \leq n} \alpha_{p,i_1} \dots \alpha_{p,i_j}, \qquad \text{for} \; \; 1 \leq j \leq n-1  .$$ \\
\end{definition}

\noindent
$L_f(s)$ satisfies the functional equation:
\begin{align*}
\Lambda_f(s) &:= \prod_{i=1}^n \pi^{\frac{-s+\lambda_i(v_f)}{2}} \Gamma \left( \frac{s-\lambda_i(v_f)}{2} \right) L_f(s) \\
&= \Lambda_{\tilde{f}}(1-s), 
\end{align*}
where $\tilde{f}$ is the Dual Maass form. \\

\noindent
In the case of Godement--Jacquet $L$-function, Yujiao Jiang and Guangshi L\"{u} \cite{YjGl} have studied cancellation on the exponential sum $\sum \limits_{m \leq N} \mu(m)A(m,1)e^{2 \pi i m \theta}$ related to $SL(3,\mathbb{Z})$ where $\theta \in \mathbb{R}$ . \\

\noindent
\emph{Throughout the paper, we assume that $f$ is self dual i.e., $\tilde{f}=f$.} \\ 
\emph{$\epsilon$, $\epsilon_1$ and $\eta$ always denote any small positive constants.} \\ \\
If $N_f(T)$ denotes the number of zeros of $L_f(s)$ in the rectangle mentioned below, then from the functional equation and the argument principle of complex function theory we have, 
$$N_f(T) \sim c(n) T \log T ,$$
where $c(n)$ is a non zero constant depending only on the degree $n$ of $L_f(s)$. \\

\begin{center}
\begin{tikzpicture}

\draw (0,3) -- (0,0) ; 
\draw (0,0) -- (5,0) ; 
\draw (5,0) -- (5,3) ; 
\draw (5,3) -- (0,3) ; 

\node at (0,0) {$\boldsymbol{\cdot}$};
\node at (0,3) {$\boldsymbol{\cdot}$};
\node at (5,0) {$\boldsymbol{\cdot}$};
\node at (5,3) {$\boldsymbol{\cdot}$};

\coordinate [label= left:$-1+iT$] (A) at (0,0);
\coordinate [label= left:$-1+2iT$] (D) at (0,3);
\coordinate [label= right:$2+iT$] (B) at (5,0);
\coordinate [label= right:$2+2iT$] (C) at (5,3); 

\end{tikzpicture} 
\end{center} 
\hfill\\

\noindent
\textbf{(i) The generalized Ramanujan conjecture:} \\
It asserts that
$$ \abs{A(m,1,\dots,1)} \leq d_n(m) $$
where $d_n(m)$ is the number of representations of $m$ as the product of $n$ natural numbers. The current best estimates are due to Kim and Sarnak \cite{Hk} for $2 \leq n \leq 4$ and Luo, Rudnick and Sarnak for $n \geq 5$
\begin{align*}
\abs{A(m)} \leq m^{\frac{7}{64}} d(m) ,\\
\abs{A(m,1)} \leq m^{\frac{5}{14}} d_3(m) ,\\
\abs{A(m,1,1)} \leq m^{\frac{9}{22}} d_4(m) ,\\
\abs{A(m,1,\dots,1)} \leq m^{\frac{1}{2}-\frac{1}{n^2+1}} d_n(m). \\
\end{align*}

\noindent
We note that the generalized Ramanujan conjecture is equivalent to 
$$ \abs{\alpha_{p,i}} =1 \qquad \forall \; \text{primes $p$ and} \; i=1,2,\dots,n.$$
Other estimates are equivalent to 
$$ \abs{\alpha_{p,i}} \leq p^{\theta_n} \qquad \forall \; \text{primes $p$ and} \; i=1,2,\dots,n \; \text{where}$$
\begin{align*}
\theta_2 := \frac{7}{64}, \qquad \theta_3 := \frac{5}{14}, \qquad \theta_4 := \frac{9}{22}, \qquad \theta_n := \frac{1}{2} - \frac{1}{n^2+1} (n \geq 5).\\
\end{align*}

\noindent
\textbf{(ii) Ramanujan's generalized weak conjecture:} \\
We formulate this conjecture as: \\
For $n \geq 2$, the inequality 
$$ \abs{\alpha_{p,i}} \leq p^{\frac{1}{4} - \epsilon_1} $$
holds for some small $\epsilon_1 >0 $, for every prime $p$ and for $i=1,2,\dots,n$. Of course, this weak conjecture holds good for $n=2$. For $n \geq 3$, this conjecture is still open. \\

\noindent
Taking the logarithmic derivative of $L_f(s)$, we have
$$ -{\frac{L_f'}{L_f}}(s) := \sum_{m=1}^{\infty} \frac{\Lambda_f(m)}{m^s} = \sum_{m=1}^{\infty} \frac{\Lambda(m) a_f(m)}{m^s} $$
where $a_f(m)$ is multiplicative and
$$ a_f(p^r) = \sum_{i=1}^n \alpha_{p,i}^r $$
for any integer $r \geq 1$. \\
In particular,
$$ a_f(p) = \sum_{i=1}^n \alpha_{p,i} = A(p,1,\dots,1) .$$ \\

\noindent
\textbf{(iii) Rudnick--Sarnak conjecture:} \\
For any fixed integer $r \geq 2$,
$$ \sum_p \frac{\abs{a_f(p^r)}^2 (\log p)^2}{p^r} < \infty .$$
We know that this conjecture is true for $n \leq 4$. (See \cite{Hh,ZrPs}.) \\

\noindent
\textbf{(iv) Riemann hypothesis for $L_f(s)$:} \\
It asserts that $L_f(s) \neq 0$ in $\Re(s) > \frac{1}{2}$. \\ \\

\noindent
The aim of this paper is to establish: \\
\begin{theorem} \label{t1}
Ramanujan's weak conjecture implies Rudnick--Sarnak conjecture. \\
\end{theorem}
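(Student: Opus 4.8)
The plan is a direct term-by-term estimate of the series appearing in the Rudnick--Sarnak conjecture, using only the weak Ramanujan bound on the Satake parameters $\alpha_{p,i}$. First I would fix an integer $r \geq 2$ and bound $a_f(p^r)$ pointwise. Since $a_f(p^r) = \sum_{i=1}^n \alpha_{p,i}^r$, the triangle inequality together with the weak conjecture $\abs{\alpha_{p,i}} \leq p^{\frac14 - \epsilon_1}$ gives
$$ \abs{a_f(p^r)} \leq \sum_{i=1}^n \abs{\alpha_{p,i}}^r \leq n\, p^{r\left(\frac14 - \epsilon_1\right)}, \qquad \text{hence} \qquad \abs{a_f(p^r)}^2 \leq n^2\, p^{\frac{r}{2} - 2r\epsilon_1}, $$
where $n$ is the (fixed) degree of $L_f(s)$.

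Next I would substitute this into the series and reduce to a convergent Dirichlet-type sum over primes:
$$ \sum_p \frac{\abs{a_f(p^r)}^2 (\log p)^2}{p^r} \;\leq\; n^2 \sum_p \frac{(\log p)^2}{p^{\frac{r}{2} + 2r\epsilon_1}} \;\leq\; n^2 \sum_{m \geq 2} \frac{(\log m)^2}{m^{\frac{r}{2} + 2r\epsilon_1}}. $$
Because $r \geq 2$, the exponent satisfies $\frac{r}{2} + 2r\epsilon_1 \geq 1 + 4\epsilon_1 > 1$, so it exceeds $1$ by a fixed positive amount $\delta := 4\epsilon_1 > 0$ (and by even more when $r \geq 3$). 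Since $(\log m)^2 \ll_\delta m^{\delta/2}$ for $m$ large, the series $\sum_{m \geq 2} (\log m)^2 m^{-(1+\delta)}$ converges, and therefore so does $\sum_p \abs{a_f(p^r)}^2 (\log p)^2 p^{-r}$. As $r \geq 2$ was arbitrary, this is exactly the Rudnick--Sarnak conjecture.

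I do not expect a genuine obstacle here: the statement is essentially a one-line consequence once the Satake parameters are controlled within $p^{\frac14 - \epsilon_1}$. The only point requiring a moment's care is the borderline case $r = 2$, where $\frac{r}{2} = 1$: the merely unconditional bound $\abs{\alpha_{p,i}} \leq p^{1/4}$ would leave the exponent equal to $1$ and the sum divergent (by a logarithmic factor), and it is precisely the strict saving $p^{-\epsilon_1}$ built into the weak conjecture that supplies the needed $p^{-4\epsilon_1}$ to push the exponent past $1$. For $r \geq 3$ the conclusion already follows from $\abs{\alpha_{p,i}} \leq p^{1/4}$ alone, so the full strength of the weak conjecture is only used in the case $r = 2$.
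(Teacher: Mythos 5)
Your proof is correct and follows essentially the same route as the paper: bound $\abs{a_f(p^r)} \leq n\,p^{r(\frac14-\epsilon_1)}$ by the triangle inequality and observe that the resulting prime sum has exponent $\frac{r}{2}+2r\epsilon_1 \geq 1+4\epsilon_1 > 1$. The only (immaterial) difference is that the paper sums the geometric series over all $r \geq 2$ at once to get convergence of the double sum, while you treat each fixed $r$ separately; your closing remark that the $\epsilon_1$-saving is genuinely needed only at $r=2$ is accurate.
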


\begin{remark}
\normalfont
Theorem \ref{t1} is indicated in \cite{Hh}.
\end{remark}

\begin{theorem} \label{t2}
Assume $n \geq 5$ be any arbitrary but fixed integer. Let $\epsilon$ be any small positive constant and $T \geq T_0$ where $T_0$ is sufficiently large. Assume the Rudnick--Sarnak conjecture and Riemann hypothesis for $L_f(s)$. Then the estimate:
$$ \int_T^{2T} \abs{\frac{L_f'}{L_f} \left( \sigma_0 + it \right) }^2 dt \ll_{f,n,\epsilon,\eta} T (\log T)^{2\eta}$$
holds for $\frac{1}{2}+\epsilon \leq \sigma_0 \leq 1-\epsilon$ with $\eta$ being some constant satisfying $0<\eta<\frac{1}{2}$. \\
\end{theorem}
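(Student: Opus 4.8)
The plan is to approximate $-\tfrac{L_f'}{L_f}(\sigma_0+it)$ by a short smoothed Dirichlet series plus a sum over the nontrivial zeros of $L_f$, and then to bound each piece in mean square over $t\in[T,2T]$. First I would fix a parameter $x\ge 2$ (eventually a power of $\log T$) and, using the Mellin identity $e^{-y}=\tfrac{1}{2\pi i}\int_{(2)}\Gamma(w)y^{-w}\,dw$ termwise in $-\tfrac{L_f'}{L_f}(s)=\sum_m\Lambda(m)a_f(m)m^{-s}$ (legitimate since $\sum_m\Lambda(m)a_f(m)m^{-w'}$ converges absolutely for $\Re w'\ge\sigma_0+2$), write for $s=\sigma_0+it$
$$\sum_{m\ge 1}\frac{\Lambda(m)a_f(m)}{m^{s}}\,e^{-m/x}=\frac{1}{2\pi i}\int_{(2)}\Bigl(-\frac{L_f'}{L_f}(s+w)\Bigr)\Gamma(w)\,x^{w}\,dw.$$
Moving the contour to $\Re w=-\tfrac12$ (permissible by the exponential decay of $\Gamma$ on horizontal lines), I pass the pole of $\Gamma$ at $w=0$ with residue $-\tfrac{L_f'}{L_f}(s)$, the poles $w=\rho-s$ at the nontrivial zeros $\rho=\beta+i\gamma$ of $L_f$ with residues $-\Gamma(\rho-s)x^{\rho-s}$, and finitely many trivial zeros contributing $O_f(e^{-c\abs{t}})$ (their $\Gamma$-argument has imaginary part $\asymp t$). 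On the new line $\Re(s+w)=\sigma_0-\tfrac12\in[\epsilon,\tfrac12-\epsilon]$, the Riemann hypothesis for $L_f$ forces every zero to be at distance $\ge\abs{1-\sigma_0}\ge\epsilon$ from $s+w$, so the partial-fraction bound $\tfrac{L_f'}{L_f}(\sigma+i\tau)=\sum_{\abs{\gamma-\tau}\le1}(\sigma+i\tau-\rho)^{-1}+O(\log(\abs{\tau}+2))$ gives $\tfrac{L_f'}{L_f}(\sigma_0-\tfrac12+iu)\ll_\epsilon\log(\abs{u}+2)$, whence the shifted integral is $\ll_{f,\epsilon}x^{-1/2}\log(\abs{t}+2)$. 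This yields the approximate formula
$$-\frac{L_f'}{L_f}(s)=\sum_{m\ge 1}\frac{\Lambda(m)a_f(m)}{m^{s}}\,e^{-m/x}+\sum_{\rho}\Gamma(\rho-s)\,x^{\rho-s}+O_{f,\epsilon}\bigl(x^{-1/2}\log(\abs{t}+2)+e^{-c\abs{t}}\bigr).$$

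Next I would bound the mean square of the smoothed Dirichlet series. Truncating it at $m\le x\log^{2}T$ (error $O_{f,n}(T^{-1})$) and applying the mean value theorem for Dirichlet polynomials,
$$\int_T^{2T}\Bigl|\sum_{m}\frac{\Lambda(m)a_f(m)\,e^{-m/x}}{m^{\sigma_0+it}}\Bigr|^{2}dt\ll\bigl(T+x\log^{2}T\bigr)\sum_{m\ge1}\frac{\abs{\Lambda(m)a_f(m)}^{2}}{m^{2\sigma_0}}+1,$$
so everything reduces to showing $\sum_{m}\abs{\Lambda(m)a_f(m)}^{2}m^{-2\sigma_0}=\sum_{p}\sum_{r\ge1}(\log p)^{2}\abs{a_f(p^{r})}^{2}p^{-2r\sigma_0}\ll_{f,n,\epsilon}1$, uniformly for $\sigma_0\ge\tfrac12+\epsilon$. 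The $r=1$ part converges because $\sum_p\abs{a_f(p)}^{2}p^{-\sigma}<\infty$ for all $\sigma>1$ — the Rankin--Selberg $L$-function $L(s,f\times\tilde f)$ being absolutely convergent for $\Re s>1$ with non-negative Dirichlet coefficients and $p$-th coefficient $\abs{a_f(p)}^{2}$ — and the extra factor $(\log p)^{2}p^{-2\epsilon}$ is bounded. For the $r\ge2$ part I would split at $r_0:=\lceil(n^{2}+1)/2\rceil$: the finitely many terms $2\le r\le r_0$ are handled by $\sum_p(\log p)^{2}\abs{a_f(p^{r})}^{2}p^{-2r\sigma_0}\le\sum_p(\log p)^{2}\abs{a_f(p^{r})}^{2}p^{-r}<\infty$, which is precisely the Rudnick--Sarnak conjecture; for $r>r_0$ the Ramanujan-type bound $\abs{a_f(p^{r})}\le n\,p^{r\theta_n}$ with $\theta_n=\tfrac12-\tfrac1{n^{2}+1}$ gives $\sum_{r>r_0}\sum_p(\log p)^{2}\abs{a_f(p^{r})}^{2}p^{-2r\sigma_0}\le n^{2}\sum_{r>r_0}\sum_p(\log p)^{2}p^{-2r(\sigma_0-\theta_n)}<\infty$, since $\sigma_0-\theta_n\ge\epsilon+\tfrac1{n^{2}+1}$ makes $2r(\sigma_0-\theta_n)>1$ with a geometric gain in $r$ once $r>r_0$. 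Hence this piece is $\ll_{f,n,\epsilon}T+x\log^{2}T$.

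For the zero sum, the Riemann hypothesis for $L_f$ gives $\abs{x^{\rho-s}}=x^{1/2-\sigma_0}$ for every nontrivial zero; with the decay $\abs{\Gamma(\tfrac12-\sigma_0+iv)}\ll e^{-\pi\abs{v}/2}$ and the standard local zero count $N_f(t+H)-N_f(t-H)\ll(H+1)\log(\abs{t}+2)$ (of the same provenance as $N_f(T)\sim c(n)T\log T$), I get $\bigl|\sum_\rho\Gamma(\rho-s)x^{\rho-s}\bigr|\ll x^{1/2-\sigma_0}\sum_\gamma e^{-\pi\abs{\gamma-t}/2}\ll x^{1/2-\sigma_0}\log T$ for $T\le t\le2T$, hence $\int_T^{2T}\bigl|\sum_\rho\Gamma(\rho-s)x^{\rho-s}\bigr|^{2}dt\ll T\,x^{1-2\sigma_0}(\log T)^{2}$; the Step-1 error term contributes $\ll_{f,\epsilon}T\,x^{1-2\sigma_0}(\log T)^{2}$ as well (using $x\ge1$ and $\sigma_0<1$). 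Collecting,
$$\int_T^{2T}\Bigl|\frac{L_f'}{L_f}(\sigma_0+it)\Bigr|^{2}dt\ll_{f,n,\epsilon}\;T+x\log^{2}T+T\,x^{1-2\sigma_0}(\log T)^{2}.$$
Choosing $x=(\log T)^{(1-\eta)/\epsilon}$ (so $x\le T$ for $T\ge T_0$) makes the middle term $\ll(\log T)^{(1-\eta)/\epsilon+2}\ll T$ and the last term $=T(\log T)^{2-(2\sigma_0-1)(1-\eta)/\epsilon}\le T(\log T)^{2\eta}$ because $2\sigma_0-1\ge2\epsilon$, which gives the claimed bound $\ll_{f,n,\epsilon,\eta}T(\log T)^{2\eta}$, uniformly in $\sigma_0\in[\tfrac12+\epsilon,1-\epsilon]$. (In fact, taking $x$ a small fixed power of $T$ would yield the stronger bound $\ll_{f,n,\epsilon}T$; the $(\log T)^{2\eta}$ is merely slack from keeping $x$ of logarithmic size.)

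The routine-but-delicate work lives in the first step: justifying the contour shift past the infinitely many (but exponentially damped) trivial zeros, bounding the shifted integral through the functional equation and Stirling's formula, and casting the mean value theorem in the weighted form used above. Conceptually, however, the two hypotheses enter in cleanly separated ways and each is indispensable: the Riemann hypothesis for $L_f$ supplies the genuine power saving $\abs{x^{\rho-s}}=x^{1/2-\sigma_0}$, without which a single zero with $\beta$ close to $\sigma_0$ would destroy the zero-sum estimate, while the Rudnick--Sarnak conjecture is exactly what forces the prime-power sum $\sum_{r\ge2}\sum_p(\log p)^{2}\abs{a_f(p^{r})}^{2}p^{-2r\sigma_0}$ to converge when $n\ge5$ — the available exponent $\theta_n$ being too near $\tfrac12$ to control the ranges $2\le r\le(n^{2}+1)/2$ on its own. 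I therefore expect the verification of convergence of that prime-power sum, together with the error bookkeeping in the approximate formula, to be the principal points requiring care.
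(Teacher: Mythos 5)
Your proposal is correct and follows the same skeleton as the paper's proof: a smoothed explicit formula obtained by Mellin inversion of $e^{-m/Y}$ against $-\frac{L_f'}{L_f}$ followed by a contour shift (you stop at $\Re(s+w)=\sigma_0-\tfrac12$ rather than the paper's $\tfrac14$, which is immaterial), the Riemann hypothesis supplying $\abs{Y^{\rho-s}}=Y^{\frac12-\sigma_0}$ together with the local zero count $\ll\log T$ for the zero sum, and Montgomery--Vaughan for the mean square of the Dirichlet polynomial. Two of your sub-arguments differ from the paper's in ways worth recording. First, for the diagonal sum at $r=1$ the paper bounds $(\log p)^2\le(\log Y)^2$ crudely and then applies partial summation to Goldfeld's estimate $\sum_{m\le Y}\abs{A(m,1,\dots,1)}^2\ll_f Y$, obtaining $O((\log Y)^2)$; this loss is precisely what forces the choice $Y=\exp\{(\log T)^{\eta}\}$ and produces the factor $(\log T)^{2\eta}$ in the statement. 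You instead absorb $(\log p)^2$ into $p^{2\epsilon}$ and invoke absolute convergence of the Rankin--Selberg $L$-function for $\Re s>1$, getting $O_{\epsilon}(1)$; as you observe, this permits a power-of-$T$ choice of the smoothing parameter and yields the stronger conclusion $\ll_{f,n,\epsilon}T$, of which the stated $T(\log T)^{2\eta}$ is a weakening. Second, for the prime powers $r\ge2$ the paper merely asserts that $\sum_{r\ge2}\sum_p(\log p)^2\abs{a_f(p^r)}^2p^{-r}$ converges ``as in the proof of Theorem \ref{t1}''; but that proof relies on $\theta_n\le\tfrac14-\epsilon_1$, which is not available here, so the citation does not literally apply. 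Your explicit split at $r_0=\lceil(n^2+1)/2\rceil$ --- Rudnick--Sarnak for the finitely many $2\le r\le r_0$, the Luo--Rudnick--Sarnak bound $\theta_n=\tfrac12-\tfrac{1}{n^2+1}$ giving a convergent geometric tail for $r>r_0$ --- is the argument the paper actually needs, and you supply it correctly.
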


\begin{remark}
\normalfont{
Since Rudnick--Sarnak conjecture is true for $2 \leq n \leq 4$, Theorem \ref{t2} holds just with the assumption of Riemann hypothesis for $L_f(s)$ whenever $2 \leq n \leq 4$. } \\
\end{remark}

\begin{remark}
\normalfont
It is not difficult to see from our arguments that only assuming Riemann Hypothesis for $L_f(s)$, Theorem \ref{t2} can be upheld for any $\sigma_0$ satisfying $1-\frac{1}{n^2+1}+ \epsilon \leq \sigma_0 \leq 1-\epsilon$ by using the bound $\theta_n = \frac{1}{2} - \frac{1}{n^2+1}$ of Luo, Rudnick and Sarnak. \\
It is also not difficult to see from our arguments that the generalized Ramanujan conjecture and the Riemann hypothesis for $L_f(s)$ together imply the bound 
\begin{equation} \label{e1}
\int_T^{2T} \abs{\frac{L_f'}{L_f}(\sigma_0+it)}^2 dt \ll_{f,n,\epsilon} T 
\end{equation}
to hold for any $\sigma_0$ satisfying $\frac{1}{2}+\epsilon \leq \sigma_0 \leq 1-\epsilon$. \\
Though we expect the bound stated in Equation \ref{e1} to hold unconditionally for $\sigma_0$ in the said range, this seems to be very hard. \\
\end{remark}

\section{Some Lemmas}

\begin{lemma} \label{l1}
If $f(s)$ is regular and 
$$ \abs{\frac{f(s)}{f(s_0)}} < e^M \qquad (M >1) $$
in \, $\abs{s-s_0} \leq r_1$, then for any constant $b$ with $0<b<\frac{1}{2}$,
$$ \abs{\frac{f'}{f}(s)-\sum_{\rho} \frac{1}{s-\rho}} \ll_b \frac{M}{r_1} $$
in \, $\abs{s-s_0} \leq \left( \frac{1}{2}-b \right) r_1$, where $\rho$ runs over all zeros of $f(s)$ such that $\abs{\rho - s_0} \leq \frac{r_1}{2}$. \\
\end{lemma}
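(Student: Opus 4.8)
\emph{Proof idea.} The plan is to run the classical Borel--Carath\'eodory-and-Jensen argument (the one underlying the standard estimate for $\zeta'/\zeta$, as in Titchmarsh). Since the hypothesis presupposes $f(s_0)\neq 0$, we may assume this together with $f\not\equiv 0$. Write $R=r_1/2$ and let $\rho_1,\dots,\rho_k$ be the zeros of $f$ in the closed disc $\abs{s-s_0}\le R$, listed with multiplicity (these are exactly the $\rho$ appearing in the statement). The first step is to bound $k$: by Jensen's formula on the disc $\abs{s-s_0}\le r_1$, on whose boundary $\abs{f(s)}<e^M\abs{f(s_0)}$, one obtains
$$ k\log 2 \;\le\; \frac{1}{2\pi}\int_0^{2\pi}\log\abs{f(s_0+r_1e^{i\theta})}\,d\theta-\log\abs{f(s_0)} \;\le\; M, $$
so $k\ll M$.

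Next I would introduce a zero-free comparison function. Setting $w_j=\rho_j-s_0$, put
$$ g(s)\;=\;f(s)\prod_{j=1}^{k}\frac{R^2-\overline{w_j}(s-s_0)}{R\,(s-\rho_j)}. $$
Each factor is a Blaschke factor for the disc $\abs{s-s_0}\le R$: it is holomorphic and non-vanishing on that closed disc (its only zero, at $s_0+R^2/\overline{w_j}$, lies outside since $\abs{w_j}\le R$; a zero of $f$ exactly on $\abs{s-s_0}=R$ merely degenerates the factor to a unimodular constant), it cancels the zero $\rho_j$ of $f$, and it has modulus $1$ on $\abs{s-s_0}=R$. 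Hence $g$ is holomorphic and zero-free in $\abs{s-s_0}\le R$, with $\abs{g}=\abs{f}$ on that circle and $\abs{g(s_0)}=\abs{f(s_0)}\prod_j(R/\abs{w_j})\ge\abs{f(s_0)}$. Consequently $\phi(s):=\log\bigl(g(s)/g(s_0)\bigr)$ is holomorphic in $\abs{s-s_0}\le R$ with $\phi(s_0)=0$ and $\Re\phi(s)=\log(\abs{g(s)}/\abs{g(s_0)})\le M$ on $\abs{s-s_0}=R$.

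Now the Borel--Carath\'eodory inequality, in the form that bounds a derivative, gives $\abs{\phi'(s)}=\abs{(g'/g)(s)}\ll_b M/R\ll M/r_1$ for $\abs{s-s_0}\le(1-2b)R=\left(\frac{1}{2}-b\right)r_1$, the constant depending only on the ratio $2b$ of the two radii. Finally, logarithmic differentiation of the defining product yields
$$ \frac{f'}{f}(s)-\sum_{j=1}^{k}\frac{1}{s-\rho_j}\;=\;\frac{g'}{g}(s)+\sum_{j=1}^{k}\frac{\overline{w_j}}{R^2-\overline{w_j}(s-s_0)}. $$
On $\abs{s-s_0}\le(1-2b)R$ the denominators are $\ge R^2-(1-2b)R^2=2bR^2$, so each term of the last sum is $\le 1/(2bR)$; since $k\ll M$ this sum is $\ll_b M/R\ll M/r_1$. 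Adding the two contributions gives the claimed bound.

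The only place where genuine care is needed --- and where I expect to do the one nonroutine computation --- is the Borel--Carath\'eodory step, i.e.\ passing from ``$\Re\phi\le M$ on $\abs{s-s_0}=R$, $\phi(s_0)=0$'' to the bound $\abs{\phi'}\ll_b M/R$ on the smaller disc, and checking that the implied constant depends only on $b$ (through $(1-2b)R$ versus $R$) and not on $f$, $M$, or $r_1$. Everything else --- the Jensen count, the unimodularity and non-vanishing of the Blaschke factors, and the $2bR^2$ denominator estimate --- is entirely routine.
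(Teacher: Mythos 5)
The paper gives no proof of this lemma at all---it simply cites Lemma $\alpha$ of Titchmarsh, \S 3.9---and your argument is exactly the standard proof of that cited result (Jensen's formula to count the zeros, removal of the zeros by a bounded zero-free comparison function, then Borel--Carath\'eodory for the derivative of its logarithm), so it is correct and in substance the same approach; the only cosmetic difference is that you strip the zeros with Blaschke factors where Titchmarsh divides by $\prod_\rho(s-\rho)$ directly. One small point worth tightening: when some $\abs{w_j}=R$ your Blaschke factor degenerates to a unimodular constant and therefore does \emph{not} cancel that zero, so $g$ is zero-free only in the open disc $\abs{s-s_0}<R$; this is harmless (the maximum principle still gives $\Re\phi\le M$ there, and the final identity and the $2bR^2$ estimate are unaffected), but the sentence claiming $g$ is zero-free on the closed disc should be adjusted accordingly.
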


\begin{proof}
See Lemma $\alpha$ in Section 3.9 of \cite{EtHb} or see \cite{KrAs}. \\
\end{proof}

\begin{lemma} \label{l2}
Let $N_f^*(T)$ denote the number of zeros of $L_f(s)$ in the region $0 \leq \sigma \leq 1$, $0 \leq t \leq T$. Then,
$$ N_f^*(T+1)-N_f^*(T) \ll_n \log T .$$ \\
\end{lemma}

\begin{proof}
Let $n(r_1)$ denote the number of zeros of $L_f(s)$ in the circle with centre $2+iT$ and radius $r_1$.
By Jensen's theorem,
$$ \int_0^3 \frac{n(r_1)}{r_1} dr_1 = \frac{1}{2 \pi} \int_0^{2 \pi} \log \abs{L_f \left( 2+iT+3e^{i \theta} \right)} d \theta - \log \abs{L_f (2+iT)} .$$
From the functional equation, we observe that
$$ \abs{L_f(s)} <_f t^A \qquad \text{for} \; -1 \leq \sigma \leq 5 \; \; \text{where $A$ is some fixed positive constant,}   $$
and hence we have,
$$\log \abs{L \left( 2+iT+3e^{i \theta} \right)} < A \log T .$$ \\

\noindent
Note that
\begin{align*}
\abs{1-\frac{\alpha_{p,i}}{p^{2+it}}} & \geq 1 - \frac{\abs{\alpha_{p,i}}}{p^2} \\
& \geq 1 - \frac{p^{\frac{1}{2}}}{p^2} \\
& = 1 - \frac{1}{p^{\frac{3}{2}}}. \\
\end{align*}

\noindent
Thus we have,
\begin{align*}
\abs{L_f(2+it)} &= \prod_p \prod_{i=1}^n \abs{ \left( 1-\frac{\alpha_{p,i}}{p^{2+it}} \right) }^{-1} \\
& \leq \prod_p \prod_{i=1}^n \left( 1-\frac{1}{p^{\frac{3}{2}}} \right)^{-1} \\
& \leq \left( \zeta \left( \frac{3}{2} \right) \right)^n \\
& \ll_n 1. \\
\end{align*}

\noindent
Therefore,
\begin{align*}
\int_0^3 \frac{n(r_1)}{r_1} dr_1 &< A \log T +A \ll \log T , \\
\int_0^3 \frac{n(r_1)}{r_1} dr_1 &\geq \int_{\sqrt{5}}^3 \frac{n(r_1)}{r_1} dr_1 \geq n(\sqrt{5}) \int_{\sqrt{5}}^3 \frac{dr_1}{r_1} \geq c.n(\sqrt{5}) . \\
\end{align*} 
Hence,
$$ N_f^*(T+1)-N_f^*(T) \ll_n \log T. $$ \\
\end{proof}

\begin{lemma} \label{l3}
Let $a_m$(m=1,2,\dots,N) be any set of complex numbers. Then
$$ \int_T^{2T} \abs{\sum_{m=1}^N a_m m^{-it} }^2 dt = \sum_{m=1}^N \abs{a_m}^2 \left( T+O(m) \right). $$ \\
\end{lemma}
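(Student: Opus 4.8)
The plan is to expand the square, integrate term by term, and isolate the diagonal. Since $m^{-it}\overline{k^{-it}}=(k/m)^{it}$, one has
\[
\int_T^{2T}\Bigl|\sum_{m=1}^N a_m m^{-it}\Bigr|^2\,dt
= \sum_{m=1}^N\sum_{k=1}^N a_m\overline{a_k}\int_T^{2T}\Bigl(\frac{k}{m}\Bigr)^{it}\,dt .
\]
The diagonal terms $m=k$ contribute exactly $T\sum_{m=1}^N|a_m|^2$, so what remains is to show that the off-diagonal part is $O\bigl(\sum_{m\le N}|a_m|^2 m\bigr)$, which is precisely the asserted error $\sum_{m}|a_m|^2\cdot O(m)$.

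For $m\neq k$ I would evaluate $\int_T^{2T}(k/m)^{it}\,dt=\bigl[(k/m)^{it}/(i\log(k/m))\bigr]_T^{2T}$, so that the off-diagonal contribution equals
\[
\Bigl[\,\sum_{m\neq k}\frac{a_m\overline{a_k}}{i\log(k/m)}\Bigl(\frac{k}{m}\Bigr)^{it}\,\Bigr]_{t=T}^{t=2T}.
\]
At a fixed endpoint $t=t_0$, the substitution $c_m:=a_m m^{-it_0}$ (so that $|c_m|=|a_m|$) absorbs the oscillating phases and turns the inner sum into the Hilbert-type bilinear form $\sum_{m\neq k} c_m\overline{c_k}/(\log k-\log m)$, i.e. a sum of the shape $\sum_{r\neq s} x_r\overline{x_s}/(\lambda_r-\lambda_s)$ with $\lambda_m=\log m$.

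Now I would invoke the Montgomery--Vaughan generalized Hilbert inequality: for distinct reals $\lambda_1,\dots,\lambda_N$ with $\delta_m:=\min_{k\neq m}|\lambda_m-\lambda_k|$,
\[
\Bigl|\sum_{r\neq s}\frac{x_r\overline{x_s}}{\lambda_r-\lambda_s}\Bigr|\ll\sum_{m=1}^N\delta_m^{-1}|x_m|^2 .
\]
With $\lambda_m=\log m$ the nearest neighbour of $m$ is $m\pm 1$, and since $|\log(1\pm 1/m)|\gg 1/m$ we get $\delta_m\gg 1/m$, hence $\delta_m^{-1}\ll m$. This bounds the inner sum at each of $t=T$ and $t=2T$ by $O\bigl(\sum_{m\le N}|a_m|^2 m\bigr)$; combining with the diagonal contribution gives the lemma.

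The only genuine input is the Hilbert-inequality estimate: the cheap bound $|\int_T^{2T}(k/m)^{it}\,dt|\le 2/|\log(k/m)|$ followed by summing trivially (after $|a_m\overline{a_k}|\le\tfrac12(|a_m|^2+|a_k|^2)$ and $|\log(k/m)|\gg|m-k|/\max(m,k)$) only yields an error of size $O\bigl(\sum_m|a_m|^2(m\log m+N)\bigr)$, which is too weak. The cancellation encoded in the Montgomery--Vaughan inequality is exactly what upgrades the per-coefficient weight $m\log m+N$ down to $m$, so that step is where all the work is; equivalently, one may simply quote the Montgomery--Vaughan mean value theorem, this being its statement restricted to the interval $[T,2T]$.
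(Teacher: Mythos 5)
Your proof is correct and is essentially the paper's approach: the paper gives no argument of its own but simply cites Montgomery--Vaughan (and Ramachandra) for this mean value theorem, and your derivation — diagonal term plus the off-diagonal bilinear form $\sum_{m\neq k}c_m\overline{c_k}/(\log k-\log m)$ controlled by the generalized Hilbert inequality with $\delta_m\gg 1/m$ — is precisely the standard proof contained in that reference. Nothing further is needed.
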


\begin{lemma} \label{l4} 
Let $b_m$ be any set of complex numbers such that $\sum m \left( \, \abs{b_m} \right)^2$ is convergent. Then
$$ \int_T^{2T} \abs{\sum_{m=1}^{\infty} b_m m^{-it} }^2 dt = \sum_{m=1}^{\infty} \abs{b_m}^2 \left( T+O(m) \right). $$ \\
\end{lemma}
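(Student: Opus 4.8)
The plan is to bootstrap the infinite case from the finite case (Lemma \ref{l3}) by an $L^2$ limiting argument; the hypothesis that $\sum_m m\abs{b_m}^2$ converges is exactly what makes the tails of the Dirichlet series negligible on $[T,2T]$.

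First, fix $T$ and set $S_N(t) := \sum_{m=1}^N b_m m^{-it}$. Applying Lemma \ref{l3} to a block $\sum_{m=N+1}^{M} b_m m^{-it}$ with $N<M$ gives
$$ \int_T^{2T} \abs{S_M(t)-S_N(t)}^2\,dt = \sum_{m=N+1}^{M}\abs{b_m}^2\bigl(T+O(m)\bigr) \ll T\sum_{m>N}\abs{b_m}^2+\sum_{m>N}m\abs{b_m}^2 . $$
Since $m\geq 1$, convergence of $\sum_m m\abs{b_m}^2$ forces convergence of $\sum_m\abs{b_m}^2$, so the right-hand side tends to $0$ as $N\to\infty$. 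Hence $(S_N)_N$ is Cauchy in $L^2[T,2T]$; its limit there is the function $S(t):=\sum_{m=1}^\infty b_m m^{-it}$ of the statement, and $\left\| S-S_N\right\|_{L^2[T,2T]}^2\ll\delta_N$, where $\delta_N:=T\sum_{m>N}\abs{b_m}^2+\sum_{m>N}m\abs{b_m}^2\to 0$.

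Next I would compare $\int_T^{2T}\abs{S}^2$ with $\int_T^{2T}\abs{S_N}^2$. Using $\left| \abs{S}^2-\abs{S_N}^2 \right|\leq\abs{S-S_N}\bigl(\abs{S}+\abs{S_N}\bigr)$ and the Cauchy--Schwarz inequality,
$$ \left| \int_T^{2T}\abs{S(t)}^2\,dt-\int_T^{2T}\abs{S_N(t)}^2\,dt \right|\ \leq\ \left\| S-S_N\right\|_2\bigl(\left\| S\right\|_2+\left\| S_N\right\|_2\bigr)\ \ll\ \sqrt{\delta_N}, $$
the norms $\left\| S_N\right\|_2$, and hence $\left\| S\right\|_2$, being bounded uniformly in $N$ by Lemma \ref{l3} together with the convergence of $\sum_m m\abs{b_m}^2$. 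Meanwhile Lemma \ref{l3} itself gives, with an absolute implied constant,
$$ \int_T^{2T}\abs{S_N(t)}^2\,dt = T\sum_{m=1}^N\abs{b_m}^2+O\Bigl(\sum_{m=1}^N m\abs{b_m}^2\Bigr). $$
Letting $N\to\infty$: the main term converges to $T\sum_{m=1}^\infty\abs{b_m}^2$, the $O$-term remains $O\bigl(\sum_{m=1}^\infty m\abs{b_m}^2\bigr)$, and the comparison error $\sqrt{\delta_N}$ disappears, which yields
$$ \int_T^{2T}\abs{S(t)}^2\,dt = T\sum_{m=1}^\infty\abs{b_m}^2+O\Bigl(\sum_{m=1}^\infty m\abs{b_m}^2\Bigr)=\sum_{m=1}^\infty\abs{b_m}^2\bigl(T+O(m)\bigr), $$
as asserted. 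The only genuinely delicate point is this passage to the limit — verifying that the $L^2$-tail of the Dirichlet series really is negligible — and that is precisely where the hypothesis $\sum_m m\abs{b_m}^2<\infty$ is used; everything else is a direct appeal to Lemma \ref{l3} and the Cauchy--Schwarz inequality.
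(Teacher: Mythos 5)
Your argument is correct, and it is essentially the content of the references the paper points to: the paper's "proof" of Lemma \ref{l4} is only a citation to Montgomery--Vaughan and to Ramachandra's note, and what those sources do is precisely the passage from the finite mean-value theorem (Lemma \ref{l3}) to the infinite one by an $L^2$ limiting argument under the hypothesis $\sum m\abs{b_m}^2<\infty$. So you have reconstructed the omitted proof rather than found a different one. Two small points deserve explicit mention. First, your deduction needs the implied constant in Lemma \ref{l3} to be absolute (independent of $N$, $T$ and the coefficients); this is true for the Montgomery--Vaughan theorem, and without it the error term $O\bigl(\sum_{m\le N} m\abs{b_m}^2\bigr)$ could not be passed to the limit, so it is worth stating. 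Second, the hypothesis $\sum m\abs{b_m}^2<\infty$ does not give absolute (or even obvious pointwise) convergence of $\sum b_m m^{-it}$, so the symbol $\sum_{m=1}^\infty b_m m^{-it}$ in the statement must be read as the $L^2[T,2T]$ limit of the partial sums, exactly as you define $S(t)$; you implicitly make this identification, and it is the standard convention, but it is the one place where the lemma's statement is being interpreted rather than taken literally. With those caveats the proof is complete: the Cauchy estimate for $\|S_M-S_N\|_{L^2}$, the norm comparison $\bigl|\|S\|_2-\|S_N\|_2\bigr|\le\|S-S_N\|_2$, and the limit $N\to\infty$ in the finite identity together give the asserted formula.
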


\begin{proof}
See \cite{HlRc} or \cite{Kr} for Montgomery and Vaughan theorem. \\
\end{proof}

\noindent
\emph{Hereafter, $Y \geq 10$ is an arbitrary parameter depending on $T$ which will be chosen suitably later. Also, $\sigma_0$ satisfies the inequality $\frac{1}{2}+\epsilon \leq \sigma_0 \leq 1-\epsilon$ for any small positive constant $\epsilon$.} \\

\begin{lemma} \label{l5}
For $\frac{1}{2}+\epsilon \leq \sigma_0 \leq 1- \epsilon$, we have
$$ \sum_{m> \frac{Y}{2}(\log Y)^2} \frac{m\abs{\Lambda_f(m)}^2 e^{-\frac{2m}{Y}}}{m^{2\sigma_0}} \ll 1 .$$ \\
\end{lemma}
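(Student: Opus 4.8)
The plan is to exploit the two structural facts about $\Lambda_f$: that it is supported on prime powers, and that the coefficients $a_f(p^r)$ are controlled by the bound $\abs{\alpha_{p,i}} \le p^{1/2}$ already used in the proof of Lemma~\ref{l2}. First I would record that for a prime power $m = p^r$ one has
$$ \abs{\Lambda_f(m)} = (\log p)\,\Bigl| \sum_{i=1}^{n} \alpha_{p,i}^{\,r} \Bigr| \le n\,(\log p)\,p^{r/2} \le n\,(\log m)\,m^{1/2}, $$
so that $\abs{\Lambda_f(m)}^2 \le n^2 (\log m)^2 m$ for all $m \ge 1$ (this being vacuous unless $m$ is a prime power). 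Feeding this into the series and using $\sigma_0 \ge \tfrac12$, so that $(\log m)^2 m^{2-2\sigma_0} \le m^2$, the general term is bounded by $m^2 e^{-2m/Y}$ up to a factor $n^2$.

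It then remains to show $\sum_{m > M_0} m^2 e^{-2m/Y} \ll 1$ with $M_0 := \tfrac{Y}{2}(\log Y)^2$. Here I would write $e^{-2m/Y} = e^{-m/Y}\cdot e^{-m/Y} \le e^{-(\log Y)^2/2}\,e^{-m/Y}$ for every $m > M_0$, and use the elementary bound $m^2 e^{-m/Y} \le 4e^{-2}Y^2 \le Y^2$ (the maximum being attained at $m = 2Y$), which reduces the sum to
$$ \sum_{m > M_0} m^2 e^{-2m/Y} \le Y^2\, e^{-(\log Y)^2/2} \sum_{m \ge 0} e^{-m/Y} \le Y^2\, e^{-(\log Y)^2/2}\, \frac{1}{1-e^{-1/Y}} \le 2\,Y^3\, e^{-(\log Y)^2/2}, $$
using $1 - e^{-1/Y} \ge \tfrac{1}{2Y}$ for $Y \ge 1$.

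Finally I would observe that $Y^3 e^{-(\log Y)^2/2} = \exp\!\bigl( 3\log Y - \tfrac12(\log Y)^2\bigr)$ tends to $0$ as $Y \to \infty$ and is continuous, hence bounded, on the compact range $10 \le Y \le e^{8}$; hence it is $\ll 1$ uniformly for all $Y \ge 10$, which gives the lemma. I do not anticipate a genuine obstacle: the only external input is the classical bound $\abs{\alpha_{p,i}} \le p^{1/2}$, and since $\sigma_0$ is bounded below by $\tfrac12$ the summand is already of the shape ``polynomial in $m$ times a rapidly decaying exponential in $m/Y$'', so super-polynomial decay of the tail past $M_0$, which is of size $\tfrac{Y}{2}(\log Y)^2$, is automatic. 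The only point needing a little care is to make the final estimate uniform in $Y$ (equivalently in $T$), which is handled by the split into $Y$ large versus $Y$ in a compact range as above; a slightly more polished alternative is to bound the tail directly by $\int_{M_0}^{\infty} x^2 e^{-2x/Y}\,dx$.
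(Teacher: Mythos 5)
Your argument is in substance the same as the paper's: both proofs first control $\abs{\Lambda_f(m)}^2$ by $n^2(\log m)^2 m$ via the crude local bound $\abs{\alpha_{p,i}}\le p^{1/2}$, and then exploit that on the range $m>\frac{Y}{2}(\log Y)^2$ one has $m/Y\ge\frac12(\log Y)^2$, so that part of the exponential weight yields a saving $e^{-c(\log Y)^2}=Y^{-B}$ for every fixed $B$, which swamps the remaining polynomial growth in $Y$. (The paper spends one factor $e^{-m/Y}$ on the bound $e^{-m/Y}\ll Y^2/m^2$ and the other on $Y^{-B}$; you spend them on $m^2e^{-m/Y}\le 4e^{-2}Y^2$ and on $e^{-(\log Y)^2/2}$.) The one flaw is in your final displayed chain: you charge the available $e^{-2m/Y}$ three times --- once to dominate $m^2$ by $Y^2$, once to extract $e^{-(\log Y)^2/2}$, and once more as the summable factor $e^{-m/Y}$ in the geometric series --- whereas only two factors of $e^{-m/Y}$ exist. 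Indeed the asserted inequality $m^2e^{-2m/Y}\le Y^2e^{-(\log Y)^2/2}e^{-m/Y}$ fails near $m=\frac{Y}{2}(\log Y)^2$ by a factor of about $\frac14(\log Y)^4$. This is only a bookkeeping slip: write $e^{-2m/Y}=e^{-2m/3Y}\cdot e^{-2m/3Y}\cdot e^{-2m/3Y}$ and use one third of the exponent for each of the three purposes, which gives $\ll Y^3e^{-(\log Y)^2/3}\ll 1$ and the lemma follows as you intended.
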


\begin{proof}
We have,
\begin{align*}
\sum_{m> \frac{Y}{2}(\log Y)^2} \frac{m\abs{\Lambda_f(m)}^2 e^{-\frac{2m}{Y}}}{m^{2\sigma_0}} &\ll \sum_{m> \frac{Y}{2}(\log Y)^2} \frac{m\abs{\Lambda_f(m)}^2 e^{-\frac{m}{Y}} \frac{Y^2}{m^2}}{m^{2\sigma_0}} \\
& \ll Y^2 \sum_{m > \frac{Y}{2}(\log Y)^2} \frac{\abs{\Lambda_f(m)}^2 e^ {-\frac{m}{Y}}}{m^{1+2\sigma_0}}. \\
\end{align*}
Since $\frac{m}{Y} \geq \frac{1}{2}(\log Y)^2$ for $m \geq \frac{Y}{2}(\log Y)^2$, we have $e^{\frac{m}{Y}} \gg Y^B$ for any large positive constant $B$. Therefore,
\begin{align*}
\sum_{m> \frac{Y}{2}(\log Y)^2} \frac{m\abs{\Lambda_f(m)}^2 e^{-\frac{2m}{Y}}}{m^{2\sigma_0}} &\ll \frac{Y^2}{Y^B} \sum_{m > \frac{Y}{2} (\log Y)^2} \frac{\abs{\Lambda_f(m)}^2}{m^{1+2\sigma_0}} \\
&\ll 1. \\
\end{align*}
\end{proof}

\begin{lemma} \label{l6}
Assuming Rudnick--Sarnak conjecture and taking $Y$ sufficiently large, we have
$$ \sum_{m \leq \frac{Y}{2}(\log Y)^2} \frac{\abs{\Lambda_f(m)}^2}{m^{2 \sigma_0}} e^{-\frac{2m}{Y}} \ll (\log Y)^2. $$ \\
\end{lemma}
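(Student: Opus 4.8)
The plan is to reduce the sum over prime powers to sums over primes, where the Rudnick--Sarnak conjecture can be applied directly, and handle the remaining contribution from primes (the $r=1$ case) separately. Recall that $\Lambda_f(m) = \Lambda(m) a_f(m)$ is supported on prime powers $m = p^r$, so
$$
\sum_{m \leq \frac{Y}{2}(\log Y)^2} \frac{|\Lambda_f(m)|^2}{m^{2\sigma_0}} e^{-\frac{2m}{Y}}
= \sum_{r \geq 1} \sum_{p^r \leq \frac{Y}{2}(\log Y)^2} \frac{(\log p)^2 |a_f(p^r)|^2}{p^{2r\sigma_0}} e^{-\frac{2p^r}{Y}}.
$$
I would split this into the $r=1$ term and the tail $r \geq 2$. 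For the tail, I would use $e^{-2p^r/Y} \leq 1$ and $\sigma_0 \geq \frac12 + \epsilon$ to bound each summand by $(\log p)^2 |a_f(p^r)|^2 / p^{r(1+2\epsilon)}$; since $p^{r(1+2\epsilon)} \geq p^r \cdot p^{2r\epsilon}$ and $p^{2r\epsilon}$ grows, the Rudnick--Sarnak conjecture $\sum_p |a_f(p^r)|^2 (\log p)^2 / p^r < \infty$ for each fixed $r \geq 2$, together with a crude bound like $|a_f(p^r)| \leq n p^{r\theta_n}$ to control the sum over $r$, gives that $\sum_{r \geq 2}(\cdots) \ll 1$, which is certainly $\ll (\log Y)^2$.

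The main term is the $r = 1$ contribution $\sum_{p \leq \frac{Y}{2}(\log Y)^2} \frac{(\log p)^2 |a_f(p)|^2}{p^{2\sigma_0}} e^{-2p/Y}$, and here one cannot invoke Rudnick--Sarnak (which excludes $r=1$). Instead I would use $|a_f(p)| = |A(p,1,\dots,1)| \leq d_n(p) = n$, so $|a_f(p)|^2 \ll_n 1$, reducing the task to showing $\sum_{p \leq \frac{Y}{2}(\log Y)^2} \frac{(\log p)^2}{p^{2\sigma_0}} e^{-2p/Y} \ll (\log Y)^2$. Since $2\sigma_0 \geq 1 + 2\epsilon > 1$, the sum $\sum_p (\log p)^2 / p^{2\sigma_0}$ converges; however, the constant would depend on $\epsilon$ and blow up as $\sigma_0 \to \frac12$. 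To get the clean $(\log Y)^2$ with an absolute implied constant uniformly in $\sigma_0 \geq \frac12 + \epsilon$, I would instead bound $p^{-2\sigma_0} \leq p^{-1}$ for $p \leq \frac{Y}{2}(\log Y)^2$ only when that helps — more carefully, write $p^{2\sigma_0} \geq p \cdot p^{2\sigma_0 - 1} \geq p$ and use $\sum_{p \leq x} (\log p)^2 / p \ll (\log x)^2$ by partial summation from the prime number theorem (or Mertens-type estimates), with $x = \frac{Y}{2}(\log Y)^2$, so $(\log x)^2 = (\log Y + O(\log\log Y))^2 \ll (\log Y)^2$.

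The main obstacle — really a bookkeeping point rather than a deep difficulty — will be keeping the dependence on $\sigma_0$ uniform across the whole strip $\frac12 + \epsilon \leq \sigma_0 \leq 1 - \epsilon$ while still extracting exactly the power $(\log Y)^2$ and no more; the estimate $\sum_{p \leq x}(\log p)^2/p \ll (\log x)^2$ is precisely what makes this work, since dropping $e^{-2p/Y}$ and $p^{2\sigma_0-1} \geq 1$ costs nothing. A secondary point to be careful about is that in the $r \geq 2$ sum, the number of relevant $r$ is $O(\log Y)$ (as $p^r \leq \frac{Y}{2}(\log Y)^2$ forces $r \ll \log Y$ when $p = 2$), so even a bound that is summable in $r$ with a bit of room — using $p^{2r\epsilon}$ in the denominator — gives $\ll 1$ comfortably. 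Assembling the two pieces yields
$$
\sum_{m \leq \frac{Y}{2}(\log Y)^2} \frac{|\Lambda_f(m)|^2}{m^{2\sigma_0}} e^{-\frac{2m}{Y}} \ll (\log Y)^2,
$$
as claimed, once $Y$ is taken sufficiently large.
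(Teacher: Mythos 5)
Your decomposition into the $r=1$ and $r\ge 2$ pieces, and your treatment of the $r\ge 2$ tail (Rudnick--Sarnak for each fixed $r$, plus the crude bound $|a_f(p^r)|\le n\,p^{r\theta_n}$ to make the sum over $r$ converge), match the paper's argument. The problem is your $r=1$ term. You bound $|a_f(p)|=|A(p,1,\dots,1)|\le d_n(p)=n$, but that inequality \emph{is} the generalized Ramanujan conjecture (equivalently $|\alpha_{p,i}|\le 1$), which is open for $n\ge 3$ and is not among the hypotheses of the lemma --- only Rudnick--Sarnak (which explicitly excludes $r=1$) is assumed. The best known pointwise bound is $|\alpha_{p,i}|\le p^{\theta_n}$ with $\theta_n=\tfrac12-\tfrac1{n^2+1}$ for $n\ge 5$, and feeding that into your $r=1$ sum gives roughly $\sum_{p\le x}(\log p)^2\,p^{1-2/(n^2+1)-2\epsilon}$, which grows like a power of $x$, not like $(\log Y)^2$. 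So the pointwise route genuinely fails here; this is not just a bookkeeping issue.

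The paper closes this gap with an \emph{average} bound in place of the pointwise one: by the Rankin--Selberg estimate (Remark 12.1.8 of Goldfeld), $\sum_{m\le x}|A(m,1,\dots,1)|^2\ll_f x$ unconditionally. Partial summation with $2\sigma_0\ge 1+2\epsilon$ then gives $\sum_{m\le \frac{Y}{2}(\log Y)^2}|A(m,1,\dots,1)|^2 m^{-2\sigma_0}\ll 1$, and pulling out $(\log p)^2\ll(\log Y)^2$ yields the claimed bound for the prime part. Replace your pointwise Ramanujan step with this second-moment argument and the rest of your proof goes through.
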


\begin{proof}
\noindent
Note that
$$ \sum_{m \leq \frac{Y}{2}(\log Y)^2} \frac{\abs{\Lambda_f(m)}^2}{m^{2 \sigma_0}} e^{-\frac{2m}{Y}} \leq \sum_{p \leq \frac{Y}{2}(\log Y)^2} \frac{(\log p)^2 \abs{a_f(p)}^2}{p^{2 \sigma_0}} + \sum_{r=2}^{ \left[ \frac{\log \frac{Y}{2} }{\log 2} \right] +1} \sum_p \frac{(\log p)^2 \abs{a_f(p^r)}^2}{(p^r)^{2 \sigma_0}}, $$
and
$$ \abs{a_f(p)} = \abs{\sum_{i=1}^n \alpha_{p,i} } = \abs{A(p,1,\dots,1)}.  $$ \\

\noindent
We have,
\begin{align*}
\sum_{m \leq Y} \frac{c_m}{m^l} &= \int_1^Y \frac{d \left( \sum_{m \leq u} c_m \right) }{u^l} \\
&= \frac{ \sum_{m \leq u} c_m }{u^l} \bigg|_1^Y - \int_1^Y (-l) \frac{ \sum_{m \leq u} c_m }{u^{l+1}} du. \\
\end{align*}

\noindent
From Remark 12.1.8 of \cite{Dg}, we have
$$ \sum_{m_1^{n-1}m_2^{n-2} \dots m_{n-1} \leq Y} \abs{A(m_1,m_2,\dots,m_{n-1})}^2 \ll_f Y .$$
Therefore, 
$$ \sum_{m \leq Y} \abs{A(m,1,\dots,1)}^2 \leq \sum_{m_1^{n-1}m_2^{n-2} \dots m_{n-1} \leq Y} \abs{A(m_1,m_2,\dots,m_{n-1})}^2 \ll_f Y .$$ \\

\noindent
Taking $l=2 \sigma_0$ and $c_m = \abs{A(m,1,\dots,1)}^2$,
$$ \sum_{m \leq \frac{Y}{2}(\log Y)^2} \frac{\abs{A(m,1,\dots,1)}^2}{m^{2 \sigma_0}} \ll 1. $$ \\

\noindent
Hence,
\begin{align*}
\sum_{p \leq \frac{Y}{2}(\log Y)^2} \frac{(\log p)^2 \abs{a_f(p)}^2}{p^{2 \sigma_0}} \ll (\log Y)^2 \sum_{m \leq \frac{Y}{2}(\log Y)^2} \frac{\abs{A(m,1,\dots,1)}^2}{m^{2 \sigma_0}} \ll (\log Y)^2. \\
\end{align*}

\noindent
By Rudnick--Sarnak conjecture and the bound $\abs{\alpha_{p,i}} \leq p^{\theta_n} $ with $\theta_n = \frac{1}{2} - \frac{1}{n^2+1}$,
$$ \sum_{r \geq 2} \sum_p \frac{(\log p)^2 \abs{a_f(p^r)}^2}{p^r} $$
converges (as in proof of Theorem \ref{t1}) and in particular,
$$ \sum_{r=2}^{\left[ \frac{\log \frac{Y}{2}}{\log 2} \right] +1} \sum_p \frac{(\log p)^2 \abs{a_f(p^r)}^2}{p^r} \ll 1.$$ \\

\noindent
Therefore,
$$ \sum_{m \leq \frac{Y}{2}(\log Y)^2} \frac{\abs{\Lambda_f(m)}^2}{m^{2 \sigma_0}} \ll (\log Y)^2. $$ \\
\end{proof}

\section{Proof of Theorem \ref{t1}}

Assuming $\abs{\alpha_{p,i}} \leq p^{\theta_n}$ with $\theta_n \leq \frac{1}{4}-\epsilon_1$, we need to prove that for every integer $n \geq 5$ and for every integer $r \geq 2$, 
$$ \sum_p \frac{(\log p)^2 \abs{a_f(p^r)}^2}{p^r} < \infty . $$ 
It is enough to show that
$$ \sum_{r=2}^{\infty} \sum_p \frac{(\log p)^2 \abs{a_f(p^r)}^2}{p^r} < \infty .$$ \\

\noindent
Using
$$ a_f(p^r) := \sum_{i=1}^n \alpha_{p,i}^r \qquad \text{and} \qquad \abs{\alpha_{p,i}} \leq p^{\theta_n}$$
we get,
\begin{align*}
\sum_{r=2}^{\infty} \sum_p \frac{(\log p)^2 \abs{a_f(p^r)}^2}{p^r} &\leq \sum_{r=2}^{\infty} \sum_p \frac{(\log p)^2 \left( \sum \limits_{i=1}^n p^{r \theta_n} \right)^2}{p^r} \\
&= \sum_{r=2}^{\infty} \sum_p \frac{(\log p)^2 n^2 p^{2r \theta_n}}{p^r} \\
& \leq n^2 \sum_p (\log p)^2 \sum_{r=2}^{\infty} \frac{p^{2r \left( \frac{1}{4}-\epsilon_1 \right)}}{p^r} \\
&= n^2 \sum_p (\log p)^2 \sum_{r=2}^{\infty} \frac{1}{p^{\frac{r}{2} +2r \epsilon_1}} \\
&= n^2 \sum_p (\log p)^2 \frac{p^{-(1+4\epsilon_1)}}{1-p^{-(\frac{1}{2}+2\epsilon_1)}} \\
&= n^2 \sum_p (\log p)^2 \frac{1}{p^{\frac{1}{2}+2\epsilon_1} \left( p^{\frac{1}{2}+2\epsilon_1}-1 \right)} \\
&\ll_{n,\epsilon_1} 1. 
\end{align*}
This proves Theorem \ref{t1}. \\

\section{Proof of Theorem \ref{t2}}

\noindent
First, we wish to approximate $\frac{L_f'}{L_f}(s)$ uniformly for $\frac{1}{2} < \sigma_0 \leq \sigma \leq \sigma_1 <1$ when $T \leq t \leq 2T$. We assume throughout below the Riemann hypothesis for $L_f(s)$. \\

\noindent
From the work of Godement--Jacquet \cite{RgHj}, it is known that the function $L_f(s)$ is of finite order in any bounded vertical strip. Hence, we can very well assume that
$$ L_f(s) \ll T^A = e^{A \log T} $$
for $-1 \leq \sigma \leq 2$, $T \leq t \leq 2T$ and $A$ some fixed positive constant. \\

\noindent
Taking $s_0=2+it$ with $t \in \mathbb{R}$, we have
$$ L_f(2+it) = \prod_p \prod_{i=1}^n \left( 1-\frac{\alpha_{p,i}}{p^{2+it}} \right)^{-1} .$$ \\

\noindent
Observe that
\begin{align*}
\abs{1-\frac{\alpha_{p,i}}{p^{2+it}}} &\leq 1+ \frac{\abs{\alpha_{p,i}}}{p^2} \\
&\leq 1+ \frac{p^{\theta_n}}{p^2} \\
&= 1+ \frac{1}{p^{2-\theta_n}} \\
& \leq 1+\frac{1}{p^{\frac{3}{2}}} 
\end{align*}
because $\theta_n \leq \frac{1}{2}$ for $n \geq 2$. \\

\noindent
Therefore,
\begin{align*}
\abs{L_f(2+it)} &\geq \prod_p \prod_{i=1}^n \left( 1+\frac{1}{p^{\frac{3}{2}}} \right)^{-1} \\
&= \prod_p \left( 1+\frac{1}{p^{\frac{3}{2}}} \right)^{-n} \\
&= \prod_p \left( \frac{1-\frac{1}{p^{\frac{3}{2}}}}{1-\frac{1}{p^3}} \right)^n \\
&= \left( \frac{\zeta(3)}{\zeta \left( \frac{3}{2} \right)} \right)^n
\end{align*}
which is a constant depending only on $n$. Therefore, $L_f(2+it) \neq 0 \; \; \forall \; \; t \in \mathbb{R}$. \\

\noindent
Hence from Lemma \ref{l1}, with $r=12$, $s_0=2+iT$, $f(s)=L_f(s)$, $M=A \log T$, we obtain
$$ -\frac{L_f'}{L_f}(s) = \sum_{\abs{s-s_0} \leq 6} \frac{1}{s-\rho} + O(\log T). $$ \\
For $\abs{s-s_0} \leq 3$ and so in particular for $-1 \leq \sigma \leq 2, t=T$, replacing T by t in the particular case, we obtain
$$ -\frac{L_f'}{L_f}(s) = \sum_{\abs{\rho-s_0} \leq 6} \frac{1}{s-\rho} + O(\log t). $$ \\

\noindent
Any term occuring in $\sum \limits_{\abs{t-\gamma} \leq 1} \frac{1}{s-\rho}$ but not in $\sum \limits_{\abs{s-s_0} \leq 6} \frac{1}{s-\rho}$ is bounded and the number of such terms does not exceed
$$ N_f^*(t+6)-N_f^*(t-6) \ll \log t ,$$
where $N_f^*(t)$ is the number of zeros of $L_f(s)$ in the region $0 \leq \sigma \leq 1$ and $0 \leq t \leq T$. Thus, we get
$$ -\frac{L_f'}{L_f}(s) = \sum_{\abs{t-\gamma} \leq 1} \frac{1}{s-\rho} +O(\log t). $$ \\

\noindent
Assume $\frac{1}{2} < \sigma <1$ and $T \leq t \leq 2T$, then
$$ \sum_{m=1}^{\infty} \frac{\Lambda_f(m)}{m^s}e^{-\frac{m}{Y}} = - \frac{1}{2 \pi i} \int_{2-i\infty}^{2+i\infty} \frac{L_f'}{L_f}(s+w) \Gamma(w) Y^w dw .$$
Note also that from the above reasoning
$$ \frac{L_f'}{L_f}(s) \ll \log t \qquad \text{on any line} \; \sigma \neq \frac{1}{2}.$$
Also,
$$ \frac{L_f'}{L_f}(s) \ll \frac{\log t}{\min (\, \abs{t-\gamma})} + \log t \qquad \text{uniformly for} \; -1 \leq \sigma \leq 2.$$ \\

\noindent
From Lemma \ref{l2}, we observe that each interval $(j,j+1)$ contains values of $t$ whose distance from the ordinate of any zero exceeds $\frac{A}{\log j}$, there is a $t_j$ in any such interval for which 
$$ \frac{L_f'}{L_f}(s) \ll (\log t)^2 \qquad \text{where} \; -1 \leq \sigma \leq 2 \; \text{and} \; t=t_j.$$ \\

\noindent
Applying Cauchy's residue theorem to the rectangle, we get
\begin{center}
\begin{tikzpicture}

\draw (0,3) -- (0,0) node[pos=0.5]{\tikz \draw[- angle 90] (0,1 pt) -- (0,-1 pt);};
\draw (0,0) -- (5,0) node[pos=0.5]{\tikz \draw[- angle 90] (-1 pt,0) -- (1 pt,0);};
\draw (5,0) -- (5,3) node[pos=0.5]{\tikz \draw[- angle 90] (0,-1 pt) -- (0,1 pt);};
\draw (5,3) -- (0,3) node[pos=0.5]{\tikz \draw[- angle 90] (1 pt,0) -- (-1 pt,0);};

\node at (0,0) {$\boldsymbol{\cdot}$};
\node at (0,3) {$\boldsymbol{\cdot}$};
\node at (5,0) {$\boldsymbol{\cdot}$};
\node at (5,3) {$\boldsymbol{\cdot}$};

\coordinate [label= left:$\frac{1}{4}-\sigma-i t_j$] (A) at (0,0);
\coordinate [label= left:$\frac{1}{4}-\sigma+i t_j$] (D) at (0,3);
\coordinate [label= right:$2-i t_j$] (B) at (5,0);
\coordinate [label= right:$2+i t_j$] (C) at (5,3);

\end{tikzpicture}
\end{center}

\begin{align*}
&\frac{1}{2 \pi i} \left( \int_{2-i t_j}^{2+i t_j} + \int_{2+i t_j}^{\frac{1}{4}-\sigma+i t_j} + \int_{\frac{1}{4}-\sigma+i t_j}^{\frac{1}{4}-\sigma-i t_j} + \int_{\frac{1}{4}-\sigma-i t_j}^{2-i t_j} \right) \frac{L_f'}{L_f}(s+w) \Gamma(w) Y^w dw \\
&= \frac{L_f'}{L_f}(s) + \sum_{-t_j < \gamma < t_j} \Gamma(\rho-s) Y^{\rho-s}. \\
\end{align*}

\noindent
In the sum appearing on the right hand side above, zeros $\rho$ are counted with its multiplicity if there are any multiple zeros. The integrals along the horizontal lines tend to zero as $j \to \infty$ since gamma function decays exponentially and $Y$ is going to be at most a power of $T$ only, so that
$$ \sum_{m=1}^{\infty} \frac{\Lambda_f(m)}{m^s} e^{-\frac{m}{Y}} = \frac{1}{2 \pi i} \int_{\frac{1}{4}-\sigma -i \infty}^{\frac{1}{4}-\sigma +i \infty} \frac{L_f'}{L_f}(s+w) \Gamma(w) Y^w dw - \frac{L_f'}{L_f}(s) - \sum_{\rho} \Gamma(\rho-s) Y^{\rho -s} .$$ \\

\noindent
Note that $ \Gamma(w) \ll e^{-A \, \abs{v}} $ so that the integral on $\Re(w) = \frac{1}{4} - \sigma$ is
\begin{align*}
&\ll \int_{- \infty}^{\infty} e^{-A \, \abs{v}} \log (\, \abs{t+v}+2) Y^{\frac{1}{4}-\sigma} dv \\
&\ll \int_0^{2t} e^{-A \, \abs{v}} \log (10 \abs{t} +2) Y^{\frac{1}{4}-\sigma} dv + \left( \int_{-\infty}^0 + \int_{2t}^{\infty} \right) e^{-A \, \abs{v}} \log (\, \abs{v}+10) Y^{\frac{1}{4}-\sigma} dv \\
&\ll Y^{\frac{1}{4}-\sigma} \log T + Y^{\frac{1}{4}-\sigma} \\
&\ll Y^{\frac{1}{4}-\sigma} \log T. \\
\end{align*}

\noindent
Note that for $\frac{1}{2} < \sigma_0 \leq \sigma \leq \sigma_1 <1$,
$$ \abs{\Gamma(\rho-s)} < A_1 e^{-A_2 \, \abs{\gamma-t}} $$
uniformly for $\sigma$ in the said range.
Therefore,
$$ \sum_{\rho} \abs{\Gamma(\rho-s)} < A_1 \sum_{\rho} e^{-A_2 \, \abs{\gamma-t}} = A_1 \sum_{m=1}^{\infty} \sum_{m-1 \leq \gamma \leq m} e^{-A_2 \, \abs{t-\gamma}} .$$ \\

\noindent
The number of terms in the inner sum is 
$$ \ll \log (\, \abs{t}+m) \ll \log \, \abs{t}+\log (m+1)$$ 
and hence
$$\sum_{\rho} \abs{\Gamma(\rho-s)} \ll \sum_{m=1}^{\infty} e^{-A_2 m} (\log \, \abs{t} + \log (m+1)) \ll \log T ,$$
$$\abs{\sum_{\rho} \Gamma(\rho-s) Y^{\rho-s}} \ll Y^{\frac{1}{2}-\sigma} \log T .$$ \\
Thus for $\frac{1}{2} < \sigma_0 \leq \sigma \leq \sigma_1 <1$, we have
$$ - \frac{L_f'}{L_f}(s) = \sum_{m=1}^{\infty} \frac{\Lambda_f(m)}{m^s} e^{-\frac{m}{Y}} + O_f(Y^{\frac{1}{2} - \sigma} \log T) .$$ \\

\noindent
Thus for $\frac{1}{2}+\epsilon \leq \sigma_0 \leq 1-\epsilon$ and $T \leq t \leq 2T$, we obtain

\begin{align*}
\abs{ \frac{L_f'}{L_f}(\sigma_0 + it) }^2 &\ll \abs{ \sum_{m=1}^{\infty} \frac{\Lambda_f(m)e^{-\frac{m}{Y}}}{m^{\sigma_0+it}} }^2 + \left( Y^{\frac{1}{2}-\sigma_0} \log T \right)^2 .\\
\end{align*}
Thus,
\begin{align*}
\int_T^{2T} \abs{ \frac{L_f'}{L_f}(\sigma_0 + it) }^2 dt &\ll_f \int_T^{2T} \abs{ \sum_{m=1}^{\infty} \frac{\Lambda_f(m)e^{-\frac{m}{Y}}}{m^{\sigma_0+it}} }^2 dt + Y^{1-2\sigma_0} T (\log T)^2 . \\
\end{align*}

\noindent
We note that
$$ \abs{ \sum_{m=1}^{\infty} \frac{\Lambda_f(m)e^{-\frac{m}{Y}}}{m^{\sigma_0+it}} }^2 \ll \abs{ \sum_{m \leq \frac{Y}{2} (\log Y)^2} \frac{\Lambda_f(m)e^{-\frac{m}{Y}}}{m^{\sigma_0+it}} }^2 + \abs{ \sum_{m> \frac{Y}{2}(\log Y)^2} \frac{\Lambda_f(m)e^{-\frac{m}{Y}}}{m^{\sigma_0+it}} }^2,  $$
and hence
\begin{align*}
\int_T^{2T} \abs{ \frac{L_f'}{L_f}(\sigma_0 + it) }^2 dt &\ll_f \int_T^{2T} \abs{ \sum_{m \leq \frac{Y}{2} (\log Y)^2} \frac{\Lambda_f(m)e^{-\frac{m}{Y}}}{m^{\sigma_0+it}} }^2 + \int_T^{2T} \abs{ \sum_{m> \frac{Y}{2}(\log Y)^2} \frac{\Lambda_f(m)e^{-\frac{m}{Y}}}{m^{\sigma_0+it}} }^2 \\
& \quad + Y^{1-2\sigma_0} T (\log T)^2 . \\
\end{align*}

\noindent
By Montgomery--Vaughan theorem (Lemmas \ref{l3} and \ref{l4}) and Lemma \ref{l5}, we get
\begin{align*}
\int_T^{2T} \abs{ \frac{L_f'}{L_f}(\sigma_0 + it) }^2 dt & \ll_f \sum_{m \leq \frac{Y}{2} (\log Y)^2 } \frac{\abs{\Lambda_f(m)}^2e^{-\frac{2m}{Y}}}{m^{2 \sigma_0}} \left( T+O(m) \right)  \\
& \quad+ \sum_{m > \frac{Y}{2}(\log Y)^2} \frac{\abs{\Lambda_f(m)}^2e^{-\frac{2m}{Y}}}{m^{2 \sigma_0}} \left( T+O(m) \right) + Y^{1-2\sigma_0} T (\log T)^2 \\ 
& \ll_f T \sum_{m \leq \frac{Y}{2} (\log Y)^2 } \frac{\abs{\Lambda_f(m)}^2e^{-\frac{2m}{Y}}}{m^{2 \sigma_0}} + \sum_{m \leq \frac{Y}{2} (\log Y)^2 } m \frac{\abs{\Lambda_f(m)}^2e^{-\frac{2m}{Y}}}{m^{2 \sigma_0}} \\
& \quad + T \sum_{m > \frac{Y}{2}(\log Y)^2} \frac{\abs{\Lambda_f(m)}^2e^{-\frac{2m}{Y}}}{m^{2 \sigma_0}} + \sum_{m > \frac{Y}{2}(\log Y)^2} m \frac{\abs{\Lambda_f(m)}^2e^{-\frac{2m}{Y}}}{m^{2 \sigma_0}} \\
& \quad + Y^{1-2\sigma_0} T (\log T)^2. \\
\end{align*}

\noindent
By Lemmas \ref{l5} and \ref{l6}, we obtain
\begin{align*}
\int_T^{2T} \abs{\frac{L_f'}{L_f} \left( \frac{1}{2} + \epsilon + it \right) }^2 dt \ll_{f,n,\epsilon} T(\log Y)^2 + Y(\log Y)^4 + Y^{1-2\sigma_0} T (\log T)^2 .\\
\end{align*}
We choose $Y = \exp \{(\log T)^{\eta} \}$ with any $\eta$ satisfying $0<\eta<\frac{1}{2}$ so that we obtain
\begin{align*}
\int_T^{2T} \abs{\frac{L_f'}{L_f} \left( \sigma_0 + it \right) }^2 dt \ll_{f,n,\epsilon,\eta} T(\log T)^{2 \eta} .\\
\end{align*}
This proves Theorem \ref{t2}. 

\section*{Acknowledgements}
First author is thankful to UGC for its supporting NET Junior Research Fellowship with UGC Ref. No. : 1004/(CSIR--UGC NET Dec. 2017). The authors are thankful to the anonymous referee for some fruitful comments. \\

\bibliographystyle{amsalpha}

\begin{thebibliography}{6}

\bibliographysize{small}
  
\bibitem{RgHj}
R. Godement, \& H. Jacquet, \emph{Zeta functions of simple algebras} (Vol. 260). Springer (2006). \url{https://doi.org/10.1007/BFb0070263}

\bibitem{Dg}
D. Goldfeld, \emph{Automorphic forms and L-functions for the group GL(n, R)} (Vol. 99) . Cambridge University Press (2006). \url{https://doi.org/10.1017/CBO9780511542923}

  
\bibitem{YjGl}
Y. Jiang, \&  G. L\"{u}, \emph{Exponential sums formed with the von Mangoldt function and Fourier coefficients of ${GL}(m)$ automorphic forms}. Monatshefte für Mathematik, 184(4), 539-561  (2017). \url{https://doi.org/10.1007/s00605-017-1068-4}

\bibitem{Hk}
H. Kim,  \emph{Functoriality for the exterior square of $GL_4$ and the symmetric fourth of $GL_2$}. Journal of the American Mathematical Society, 16(1), 139-183 (2003). \url{https://doi.org/10.1090/S0894-0347-02-00410-1}

\bibitem{Hh}
H. H. Kim,  \emph{A note on Fourier coefficients of cusp forms on $GL_n$}. Forum Math. 18, 115-119 (2006). \url{https://doi.org/10.1515/FORUM.2006.007}

\bibitem{Kr}
K. Ramachandra, \emph{Some remarks on a theorem of Montgomery and Vaughan}. Journal of Number Theory, 11(3), 465-471 (1979). \url{https://doi.org/10.1016/0022-314X(79)90011-8}

\bibitem{KrAs}
K. Ramachandra, \& A. Sankaranarayanan, \emph{Notes on the Riemann zeta-function}. The Journal of the Indian Mathematical Society, 57(1-4), 67-77 (1991). \url{http://informaticsjournals.in/index.php/jims/article/view/21900}

\bibitem{ZrPs}
Z. Rudnick, \& P. Sarnak,  \emph{Zeros of principal L-functions and random matrix theory}. Duke Mathematical Journal, 81(2), 269-322 (1996). \url{https://doi.org/10.1215/S0012-7094-96-08115-6}

\bibitem{EtHb}
E. C. Titchmarsh, \& D. R. Heath-Brown,  \emph{The theory of the Riemann zeta-function}. Oxford university press (1986). \url{https://doi.org/10.1112/blms/20.1.77}

\bibitem{HlRc}
H.L. Montgomery, \& R.C. Vaughan,  \emph{Hilbert's inequality}. J. London Math. Soc., 2(8), 73-82 (1974). \url{https://doi.org/10.1112/jlms/s2-8.1.73}

  
\end{thebibliography}

\authoraddresses{Amrinder Kaur \\
               School of Mathematics and Statistics \\
               University of Hyderabad \\
               Hyderabad - 500046 \\
               Telangana \\
               India. \\
              \email{amrinder1kaur@gmail.com}           \\

              Ayyadurai Sankaranarayanan  \\
              School of Mathematics and Statistics \\
              University of Hyderabad \\
               Hyderabad - 500046 \\
               Telangana \\
               India. \\
              \email{sank@uohyd.ac.in}
}

\end{document}